\newtheorem{theorem}{Theorem}[section]
\newtheorem{lemma}[theorem]{Lemma}
\numberwithin{figure}{section}
\theoremstyle{definition}
\newtheorem{definition}[theorem]{Definition}
\newtheorem{example}[theorem]{Example}
\theoremstyle{remark}
\newtheorem{remark}[theorem]{Remark}
\numberwithin{equation}{section}
	\DeclareMathOperator*{\esssup}{ess\,sup}
\begin{document}

\title[Spectral estimates of the Dirichlet-Laplace operator]{Spectral estimates of the Dirichlet-Laplace operator
in conformal regular domains}

\author{Ivan Kolesnikov and  Valerii Pchelintsev}
\begin{abstract}
In this paper we consider conformal spectral estimates of the Dirichlet-Laplace operator in conformal regular domains $\Omega \subset \mathbb R^2$. This study is based on the geometric theory of composition operators on Sobolev spaces that permits
us to estimate constants of the Poincar\'e-Sobolev inequalities. On this base we obtain lower estimates of the first eigenvalue of the Dirichlet-Laplace operator in a class of conformal regular domains. As a consequence we obtain conformal estimates of the ground state energy of quantum billiards.
\end{abstract}
\maketitle
\footnotetext{\textbf{Key words and phrases:} Elliptic equations, Sobolev spaces, quantum billiards, conformal mappings.}
\footnotetext{\textbf{2010
Mathematics Subject Classification:} 35P15, 46E35, 81Q50, 30C35.}

\section{Introduction}
This paper is devoted to study conformal spectral estimates of the Laplace operator with the Dirichlet boundary condition:
\begin{equation}\label{Dir}
-\Delta u=\lambda u\text{ in }\Omega, \quad u=0\text{ on }\partial \Omega,
\end{equation}
in bounded simply connected domains $\Omega \subset \mathbb R^2$.

We consider the Dirichlet eigenvalue problem~\eqref{Dir} in the weak formulation.
Recall that a function $u\in W_0^{1,2}(\Omega)$ is a weak solution of the spectral problem for the Laplace
operator with the Dirichlet boundary condition if
\begin{equation}\label{WFEP}
\iint\limits_{\Omega} \left\langle \nabla u(x,y), \nabla v(x,y) \right\rangle dxdy\\
= \lambda \iint\limits_{\Omega} u(x,y)v(x,y)~dxdy
\end{equation}
for all $v\in W_0^{1,2}(\Omega)$.

By the Sobolev embedding theorems \cite{M} it follows that the Dirichlet problem admit a sequence of nonnegative eigenvalues, which we denote by
\[
0<\lambda_1(\Omega) \leq \lambda_2(\Omega) \leq \ldots \leq \lambda_n(\Omega) \leq \ldots\,.
\]
In this sequence each eigenvalue is counted according with its multiplicity (see \cite{Henr,KS}). By the minimax principle, the first eigenvalue $\lambda_1(\Omega)$ can be characterized as
\[
\lambda_1(\Omega)= \inf_{u \in W_0^{1,2}(\Omega) \setminus \{0\}} \frac{\iint\limits_{\Omega} |\nabla u(x,y)|^2~dxdy}{\iint\limits_{\Omega} |u(x,y)|^2~dxdy}\,.
\]

The exact calculations of $\lambda_1(\Omega)$ in arbitrary domains is a long-standing complicated problem. In fact,
explicit values of $\lambda_1(\Omega)$ are known only for several particular domains. Among them, one finds (see \cite{KS}): the rectangle, the disc, the equilateral triangle. Thus, it is important to obtain sharp estimates on $\lambda_1(\Omega)$, which depend on an intrinsic geometry of domains. The most famous instance of such an estimate is the so-called
Rayleigh-Faber-Krahn inequality \cite{F23,Kh25}. This inequality states that the disc minimizes the first Dirichlet eigenvalue of the Laplace operator among all planar domains of the same area:
\begin{equation}\label{RFK}
\lambda_1(\Omega)\geq \lambda_1(\Omega^{\ast})=\frac{{j_{0,1}^2}}{R^2_{\ast}},
\end{equation}
where $j_{0,1} \approx 2.4048$ is the first positive zero of the Bessel function $J_0$ and $\Omega^{\ast}$ is a disc with radius $R_{\ast}$ of the same area as $\Omega$. Note that the Rayleigh-Faber-Krahn inequality was refined using the capacity method, see $\S 4.7$ in \cite{M}.

The estimate~\eqref{RFK} is not optimal, especially it's very rough for narrow and elongated domains. We easily verify it considering the rectangle $Q:=(0,a)\times(0,b)$. For the rectangle $Q$ the explicit value is \cite{Henr} 
\[
\lambda_1(Q)=\frac{\pi^2}{a^2}+\frac{\pi^2}{b^2}
\]
and the estimate~\eqref{RFK} takes the form
\[
\lambda_1(Q)\geq \frac{\pi{j_{0,1}^2}}{ab},
\]
that for $a \gg b$ is a rough estimate.

In simply connected planar domains another lower bound for the first Dirichlet eigenvalue of the Laplace operator was obtained by Makai \cite{M65}:
\begin{equation}\label{M-H}
\lambda_1(\Omega)\geq \frac{\gamma}{\rho^2},
\end{equation}
where $\gamma =1/4$ and $\rho$ is the radius of the largest disc inscribed in $\Omega$. For convex domains, this lower bound with $\gamma = \pi^2/4$ was obtained by Hersch \cite{H60}.

In the present paper we show that integrability of conformal mappings's derivative with the exponent greater than two permit us to obtain lower estimates of the first eigenvalue $\lambda_1(\Omega)$ in terms of Sobolev norms of conformal mappings of $\Omega'$ onto $\Omega$, where $\Omega'$ is a bounded simply connected domain. So, we can conclude that $\lambda_1(\Omega)$ depends on the conformal geometry of $\Omega$.

\vskip 0.2cm
\noindent
\textbf{Theorem A.}\label{thA}
\textit{Let $\Omega$ be a conformal $\infty$-regular domain about a domain $\Omega'$. Then
\begin{equation}\label{Our}
\lambda_1(\Omega) \geq \frac{\lambda_1(\Omega')}{\big\|\varphi' \mid L^{\infty}(\Omega')\big\|^2},
\end{equation}
where $\varphi:\Omega' \to \Omega$ is the Riemann conformal mapping of $\Omega'$ onto $\Omega$.
}

\vskip 0.2cm

Note that the estimate~\eqref{Our} in some subclass of domains is better than the classical estimates~\eqref{RFK} and~\eqref{M-H}.
As an example consider a lower estimate of the first eigenvalue of Laplace operator in the non-convex domain
\[
\Omega:= \left\{w:1<|w|<e^d,\,\, 0<\arg w<\pi \right\},
\]
which is the image of the rectangle $Q:=(0,d) \times (0,\pi)$ under the conformal mapping $\varphi(z)=e^z.$ Since $\lambda_1(Q)=1+\pi^2/d^2$ and $\big\|\varphi' \mid L^{\infty}(Q)\big\|^2 \leq e^{2d}$, by Theorem~A we have
\begin{equation*}
\lambda_1(\Omega_e)\geq \frac{\pi^2+d^2}{d^2e^{2d}}.
\end{equation*}
If $0<d\leq \ln 2$ then this estimate is better than the estimates based on the Rayleigh-Faber-Krahn and Makai inequalities (see Example 4.4). More examples will be given in Section 4.

Let us remind some concepts and facts related to the Theorem A.
\begin{definition}
Let $\Omega,\Omega' \subset \mathbb R^2$ be simply connected domains. Then a domain $\Omega$ is called a conformal $\alpha$-regular domain about a domain $\Omega'$ if
\[
\iint\limits_{\Omega'} |\varphi'(u,v)|^{\alpha}~dudv<\infty
\]
for some $\alpha>2$, where $\varphi:\Omega' \to \Omega$ is a conformal mapping \cite{BGU15}.
\end{definition}

This concept does not depend on choice of a conformal mapping $\varphi:\Omega' \to \Omega$ and can be reformulated in terms of the conformal radii. Namely
\[
\iint\limits_{\Omega'} |\varphi'(u,v)|^{\alpha}~dudv=
\iint\limits_{\Omega'} \left( \frac{R_{\Omega}(\varphi(u,v))}{R_{\Omega'}(u,v)}\right)^{\alpha}dudv,
\]
where $R_{\Omega'}(u,v)$ and $R_{\Omega}(\varphi(u,v))$ are conformal radii of $\Omega'$ and $\Omega$ \cite{Avkh,BM}.
The domain $\Omega$ is a conformal regular domain if it is an $\alpha$-regular domain for some $\alpha >2$. This class of domains includes, in particular, domains with a Lipschitz boundary and fractal domains of the snowflake type \cite{GPU17_2}. The Hausdorff dimension of the conformal regular domain's boundary can be any number in [1, 2) \cite{HKN}.

\begin{remark}
Theorem~A can be reformulated in terms of the conformal radii
$R_{\Omega}(\varphi(x,y))$ of the domain $\Omega$.
\end{remark}

In the case of convex domains the estimate~\eqref{Our} can be refined in terms of the outer and inner radii of the domain, and the curvature radius of its boundary (see Theorem~\ref{convex}).

Using Theorem~A and the domain monotonicity property ($\Omega \subseteq \Omega'$ implies $\lambda_1(\Omega)-\lambda_1(\Omega')\geq 0$) we obtain an estimate for variation of the first eigenvalue of the Laplace operator with the Dirichlet boundary condition in conformal regular domains (see Theorem~\ref{var}).

The suggested method is based on the geometrical theory of composition operators on Sobolev spaces \cite{GG94,GU09,U93,VU02} in the special case of operators generated by conformal mappings. Applications of conformal composition operators were given in \cite{BGU15,BGU16,GPU17_2,GPU2018,GPU18,GU16}, where they consider the Dirichlet and Neumann eigenvalue problems for conformal regular domains.

We also note that the method based on conformal composition operators allows one to estimate the ground state energy of quantum billiards with boundaries from a large class of domains. The conformal mapping theory is widely used in study quantum billiards \cite{A10,R84}. Quantum billiards have been studied extensively in recent years (see, for example, \cite{A10,Kh20,LHK,CGP}). These systems describe the motion of a point particle undergoing perfectly elastic collisions in a bounded domain with the Dirichlet boundary condition.
Billiards have been applied in several areas of physics to model quite diverse real world systems. Examples include ray-optics \cite{KM}, lasers \cite{S10}, acoustics \cite{KNK}.

The paper is organized as follows: In Section 2 we give basic facts about composition operators on Sobolev spaces and the conformal mapping theory. In Section 3 we prove the Poincar\'e-Sobolev inequality for conformal
regular domains. In Section 4 we apply results of Section 3 to lower
estimates of the first Dirichlet eigenvalue of the Laplace operator in
the conformal regular domains. We obtain an estimate for variation of the first Dirichlet eigenvalue of the Laplace operator under conformal deformations of the domain. Also we refine these estimates and estimates of the spectral gap for convex domains.

\section{Sobolev spaces and conformal mappings}
Let $E \subset \mathbb R^2$ be a measurable set and $h:E \to \mathbb R$ be a positive a.e. locally integrable function i.e. a weight. The weighted Lebesgue space $L^p(E,h)$, $1\leq p<\infty$,
is the space of all locally integrable functions with the finite norm
$$
\|f\,|\,L^{p}(E,h)\|= \left(\iint\limits_E|f(x,y)|^ph(x,y)\,dxdy \right)^{\frac{1}{p}}< \infty.
$$
In the case $h=1$ this weighted Lebesgue space coincides with the classical Lebesgue space $L^{p}(\Omega)$.

The Sobolev space $W^{1,p}(\Omega)$, $1\leq p< \infty$, is defined
as the normed space of all locally integrable weakly differentiable functions
$f:\Omega\to\mathbb{R}$ endowed with the following norm:
\[
\|f\mid W^{1,p}(\Omega)\|=\|f\,|\,L^{p}(\Omega)\|+\|\nabla f\mid L^{p}(\Omega)\|,
\]
where $\nabla f$ is the weak gradient of the function $f$.
Recall that the Sobolev space $W^{1,p}(\Omega)$ coincides with the closure of the space of smooth functions $C^{\infty}(\Omega)$ in the norm of $W^{1,p}(\Omega)$.

The seminormed Sobolev space $L^{1,p}(\Omega)$, $1\leq p< \infty$,
is the space of all locally integrable weakly differentiable functions $f:\Omega\to\mathbb{R}$ endowed
with the following seminorm:
\[
\|f\mid L^{1,p}(\Omega)\|=\|\nabla f\mid L^p(\Omega)\|, \,\, 1\leq p<\infty.
\]

The Sobolev space $W^{1,p}_{0}(\Omega)$, $1\leq p< \infty$, is the closure in the $W^{1,p}(\Omega)$-norm of the
space $C^{\infty}_{0}(\Omega)$, where $C^{\infty}_{0}(\Omega)$ is the
space of all infinitely differentiable functions compactly supported in $\Omega$.

The seminormed Sobolev space $L^{1,p}_{0}(\Omega)$, $1\leq p< \infty$, is the closure in the $L^{1,p}(\Omega)$-seminorm of the
space $C^{\infty}_{0}(\Omega)$.

We consider the Sobolev spaces as Banach spaces of equivalence classes of functions up to a set of $p$-capacity zero \cite{M,U93}.

Let $\Omega$ and $\Omega'$ be domains in $\mathbb R^2$. We say that  a diffeomorphism $\varphi :\Omega' \to \Omega$ induces a bounded composition operator
\[
\varphi^*:L^{1,p}(\Omega) \to L^{1,q}(\Omega'),\,\,1\leq q\leq p <\infty,
\]
by the composition rule $\varphi^{*}(f)=f \circ \varphi$, if for any $f \in L^{1,p}(\Omega)$ the composition
$\varphi^{*}(f) \in L^{1,q}(\Omega')$ and there exists a constant $K< \infty$ such that
\[
\|\varphi^{*}(f)\mid L^{1,q}(\Omega')\|\leq K \|f\mid L^{1,p}(\Omega)\|.
\]

The following fact plays a key role in the conformal theory of composition operators on uniform Sobolev spaces (see, for example, \cite{C50,GMU}):

\begin{lemma} \label{L2.2}
Let $\Omega,\Omega'$ be domains in $\mathbb R^2$. Then any conformal mapping $\varphi :\Omega' \to \Omega$  induces, by the composition rule $\varphi^{*}(f)=f \circ \varphi$,
an isometry of Sobolev spaces $L^{1,2}_{0}(\Omega')$ and $L^{1,2}_{0}(\Omega)$:
\[
\|\varphi^{*}(f)\,|\,L^{1,2}_{0}(\Omega')\|=\|f\,|\,L^{1,2}_{0}(\Omega)\|
\]
for any $f \in L^{1,2}_{0}(\Omega)$.
\end{lemma}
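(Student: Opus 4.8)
The plan is to exploit the conformal invariance of the Dirichlet integral, which is the planar manifestation of the coincidence $p=n$ (here $p=2=n$). First I would record the pointwise consequence of conformality. Writing $\varphi(u,v)=(x(u,v),y(u,v))$ and identifying $\varphi$ with a holomorphic function, the Cauchy--Riemann equations $x_u=y_v$, $x_v=-y_u$ force the differential $D\varphi$ to be a scaled rotation. Consequently
\[
D\varphi^{T}D\varphi = D\varphi\, D\varphi^{T} = |\varphi'|^2\, I,
\qquad
J_\varphi=\det D\varphi = |\varphi'|^2,
\]
where $|\varphi'|^2=x_u^2+x_v^2$. The essential point is that the conformal distortion factor and the Jacobian determinant coincide.

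Next I would carry out the computation for smooth functions. For $f\in C_0^{\infty}(\Omega)$ put $g=\varphi^{*}(f)=f\circ\varphi$. The chain rule gives $\nabla g=(D\varphi)^{T}\big((\nabla f)\circ\varphi\big)$, whence, using $D\varphi\,(D\varphi)^{T}=|\varphi'|^2 I$,
\[
|\nabla g|^2=\big((\nabla f)\circ\varphi\big)^{T}D\varphi\,(D\varphi)^{T}\big((\nabla f)\circ\varphi\big)=|\varphi'|^2\,\big(|\nabla f|^2\circ\varphi\big).
\]
Integrating over $\Omega'$ and changing variables by $\varphi$, whose Jacobian is exactly $|\varphi'|^2$, one obtains
\[
\iint\limits_{\Omega'}|\nabla g|^2\,du\,dv
=\iint\limits_{\Omega'}|\varphi'|^2\,\big(|\nabla f|^2\circ\varphi\big)\,du\,dv
=\iint\limits_{\Omega}|\nabla f|^2\,dx\,dy,
\]
that is, $\|\varphi^{*}(f)\,|\,L^{1,2}_0(\Omega')\|=\|f\,|\,L^{1,2}_0(\Omega)\|$ for every smooth compactly supported $f$. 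The cancellation of the conformal factor against the Jacobian is precisely where the exponent $2$ enters, and it is the one place the argument is special to the plane.

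It remains to promote this equality from smooth functions to the whole space, since $L^{1,2}_0$ is by definition the closure of $C_0^{\infty}$ in the seminorm. Here I would use that $\varphi$, being a conformal bijection, is a homeomorphism, so the preimage of a compact set is compact: if $f\in C_0^{\infty}(\Omega)$ has support $K$, then $g=f\circ\varphi$ is smooth with support in the compact set $\varphi^{-1}(K)\subset\Omega'$. Thus $\varphi^{*}$ carries $C_0^{\infty}(\Omega)$ into $C_0^{\infty}(\Omega')$, while $(\varphi^{-1})^{*}$ reverses this, so $\varphi^{*}$ is a seminorm-preserving bijection between the two dense subspaces and extends uniquely to an isometry of their completions. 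I expect the main obstacle to be not any single estimate but the bookkeeping that makes the change of variables and the density argument legitimate for the equivalence classes (defined up to sets of $2$-capacity zero) appearing in $L^{1,2}_0$; once the smooth identity and the compatibility of supports are established, continuity finishes the proof.
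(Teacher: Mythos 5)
There is nothing to compare against here: the paper does not prove Lemma~\ref{L2.2} at all, but states it as a known fact and refers to the literature (\cite{C50,GMU}). Judged on its own merits, your proof is correct and is the standard argument for conformal invariance of the Dirichlet integral. The pointwise identities $D\varphi\,(D\varphi)^{T}=|\varphi'|^{2}I$ and $J_{\varphi}=|\varphi'|^{2}$ from the Cauchy--Riemann equations, the chain-rule computation $|\nabla(f\circ\varphi)|^{2}=|\varphi'|^{2}\bigl(|\nabla f|^{2}\circ\varphi\bigr)$, and the change of variables give the exact cancellation of the conformal factor against the Jacobian, which is indeed the whole content of the lemma and the one place where $p=n=2$ is used; your density step is also the right one, since $\varphi^{*}$ and $(\varphi^{-1})^{*}$ exchange $C_{0}^{\infty}(\Omega)$ and $C_{0}^{\infty}(\Omega')$ and a seminorm-preserving bijection of dense subspaces extends to an isometry of the closures. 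The one point you flag but leave open --- that the continuous extension of $\varphi^{*}$ is still given by the pointwise rule $f\mapsto f\circ\varphi$ on all of $L^{1,2}_{0}(\Omega)$, not merely by an abstract limit --- does require a sentence: if $f_{k}\in C_{0}^{\infty}(\Omega)$ is Cauchy in the Dirichlet seminorm with limit $f$, then by a (local) Poincar\'e--Friedrichs inequality $f_{k}\to f$ in $L^{2}_{\loc}(\Omega)$, hence a.e.\ along a subsequence; composition with the diffeomorphism $\varphi$ preserves a.e.\ convergence (null sets pull back to null sets because $J_{\varphi^{-1}}$ is locally bounded), so the seminorm limit of $f_{k}\circ\varphi$ is $f\circ\varphi$ as an element of the space of equivalence classes. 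With that remark inserted, your argument is complete and is, in substance, the proof the cited references give.
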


\section{Poincar\'e-Sobolev inequalities}

We need the following result from \cite{Pch} in the case of conformal weights.

\begin{theorem}\label{Th3.1}
Let $\Omega \subset \mathbb R^2$ be a simply connected planar domain with non-empty boundary and $h(x,y) =J_{\varphi^{-1}}(x,y)$ is the conformal weight defined by a conformal mapping $\varphi : \Omega' \to \Omega$.
Then for any function $f \in W^{1,2}_{0}(\Omega)$, the weighted Poincar\'e-Sobolev inequality
\[
\left(\iint\limits_\Omega |f(x,y)|^rh(x,y)dxdy\right)^{\frac{1}{r}} \leq A_{r,2}(h,\Omega)
\left(\iint\limits_\Omega |\nabla f(x,y)|^2 dxdy\right)^{\frac{1}{2}}
\]
holds for any $r \geq 2$ with the constant $A_{r,2}(h,\Omega) \leq A_{r,2}({\Omega'})$.
\end{theorem}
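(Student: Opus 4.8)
The plan is to pull the entire inequality back to the reference domain $\Omega'$ through the conformal change of variables, where the weight disappears and the assertion collapses to a classical, unweighted Poincar\'e-Sobolev inequality on $\Omega'$. Write $(x,y)=\varphi(u,v)$ with $(u,v)\in\Omega'$, and set $g=\varphi^{*}(f)=f\circ\varphi$. Since $\varphi$ is conformal, its Jacobian satisfies $J_{\varphi}(u,v)=|\varphi'(u,v)|^{2}$, so by the very definition of the conformal weight, $h(\varphi(u,v))=J_{\varphi^{-1}}(\varphi(u,v))=1/J_{\varphi}(u,v)$. The whole proof hinges on this reciprocity.

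First I would establish the identity for $f\in C_{0}^{\infty}(\Omega)$ and perform the change of variables in the left-hand integral. The factor $J_{\varphi}$ produced by $dxdy=J_{\varphi}\,dudv$ is cancelled exactly by $h\circ\varphi=1/J_{\varphi}$, which gives
\[
\iint\limits_{\Omega}|f(x,y)|^{r}h(x,y)\,dxdy=\iint\limits_{\Omega'}|g(u,v)|^{r}\,dudv.
\]
This precise cancellation of the conformal weight against the Jacobian is exactly why the choice $h=J_{\varphi^{-1}}$ is made. For the right-hand side I would invoke Lemma~\ref{L2.2}: the composition operator $\varphi^{*}$ is an isometry of $L^{1,2}_{0}(\Omega)$ onto $L^{1,2}_{0}(\Omega')$, so the Dirichlet integral is conformally invariant,
\[
\iint\limits_{\Omega}|\nabla f(x,y)|^{2}\,dxdy=\big\|f\,|\,L^{1,2}_{0}(\Omega)\big\|^{2}=\big\|g\,|\,L^{1,2}_{0}(\Omega')\big\|^{2}=\iint\limits_{\Omega'}|\nabla g(u,v)|^{2}\,dudv.
\]
Combining the two displays reduces the claimed weighted inequality on $\Omega$ to the ordinary Poincar\'e-Sobolev inequality on $\Omega'$ applied to $g$, namely
\[
\left(\iint\limits_{\Omega'}|g|^{r}\,dudv\right)^{\frac{1}{r}}\leq A_{r,2}(\Omega')\left(\iint\limits_{\Omega'}|\nabla g|^{2}\,dudv\right)^{\frac{1}{2}},
\]
which holds for every $r\geq2$ with the sharp constant $A_{r,2}(\Omega')$ (vacuous if that constant is infinite). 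Reading the chain backwards yields the weighted inequality on $\Omega$ together with the bound $A_{r,2}(h,\Omega)\leq A_{r,2}(\Omega')$.

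The last step is to remove the smoothness assumption and pass to an arbitrary $f\in W^{1,2}_{0}(\Omega)$ by density. I would pick $f_{n}\in C_{0}^{\infty}(\Omega)$ with $f_{n}\to f$ in $W^{1,2}_{0}(\Omega)$; the right-hand sides converge, and after extracting a subsequence converging a.e.\ the weighted $L^{r}$ integrals of $f_{n}$ are handled by Fatou's lemma, transferring the inequality to the limit $f$. I expect the main obstacle to be precisely this analytic bookkeeping rather than the algebra: one must justify that the change of variables and the isometry of Lemma~\ref{L2.2} survive for merely Sobolev (not smooth) functions, i.e.\ that $g=f\circ\varphi$ genuinely lies in $L^{1,2}_{0}(\Omega')$ and that $g$ is an admissible test function for the Poincar\'e-Sobolev inequality on $\Omega'$; this is where the composition-operator framework of Section~2, and in particular the conformal invariance supplied by Lemma~\ref{L2.2}, does the essential work, while the exact weight-Jacobian cancellation makes the $L^{r}$ side elementary.
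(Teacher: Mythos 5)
Your proof is correct, but there is nothing in the paper to compare it against: the paper does not prove Theorem~\ref{Th3.1} at all, it imports the statement from the reference \cite{Pch} (``We need the following result from \cite{Pch} in the case of conformal weights''). Your argument is the natural one and, in fact, reproduces the mechanism the authors themselves use later when proving part (2) of Theorem~\ref{Th4.3}: there, for the case $r=2$, they run exactly your chain --- cancellation of the conformal weight $h=J_{\varphi^{-1}}$ against the Jacobian under the change of variables, the unweighted Poincar\'e--Sobolev inequality on $\Omega'$, and the Dirichlet-integral isometry of Lemma~\ref{L2.2}. Two points in your write-up are worth keeping exactly as you structured them, since they close the only real gaps: first, establishing the identity for $f\in C^{\infty}_{0}(\Omega)$ makes $g=f\circ\varphi$ a genuine element of $C^{\infty}_{0}(\Omega')$ (conformal maps are smooth and proper onto their images, so supports pull back to compact sets), which sidesteps any worry about $g$ being an admissible test function on $\Omega'$; second, the Fatou-plus-a.e.-subsequence argument is the right way to pass to general $f\in W^{1,2}_{0}(\Omega)$, because the weighted $L^{r}$ functional need not be continuous along the approximating sequence, only lower semicontinuous. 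With those two steps in place the bound $A_{r,2}(h,\Omega)\leq A_{r,2}(\Omega')$ follows immediately from the definition of the best constant, so your proposal is a complete and correct proof of the cited result.
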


Here $A_{r,2}({\Omega'})$ is the best constant in the (non-weight) Poincar\'e-Sobolev
inequality in a bounded domain $\Omega' \subset \mathbb R^2$  with the upper estimate (see, \cite{GPU2019}):

\[
A_{r,2}({\Omega'}) \leq \inf\limits_{p\in \left(\frac{2r}{r+2},2\right)}
\left(\frac{p-1}{2-p}\right)^{\frac{p-1}{p}}
\frac{\left(\sqrt{\pi}\cdot\sqrt[p]{2}\right)^{-1}|{\Omega'}|^{\frac{1}{r}}}{\sqrt{\Gamma(2/p) \Gamma(3-2/p)}}.
\]

The property of the conformal $\alpha$-regularity implies the integrability
of the Jacobian of conformal mappings and therefore for any conformal $\alpha$-regular domain we have the embedding of weighted Lebesgue spaces $L^r(\Omega,h)$ into non-weighted Lebesgue spaces $L^s(\Omega)$ for $s=\frac{\alpha -2}{\alpha}r$ \cite{GU16}.
\begin{lemma}\label{Prop-reg}
Let $\Omega$ be a conformal $\alpha$-regular domain about a domain $\Omega'$. Then for any
function $f \in L^r(\Omega,h)$, $\alpha/(\alpha -2) \leq r < \infty$, the inequality
\[
\|f\mid L^s(\Omega)\|\leq \left(\iint\limits_{\Omega'}|\varphi'(u,v)|^{\alpha}dudv\right)^{\frac{2}{\alpha} \cdot \frac{1}{s}}
\|f\mid L^r(\Omega,h)\|
\]
holds for $s=\frac{\alpha -2}{\alpha}r$.
\end{lemma}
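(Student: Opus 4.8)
The plan is to obtain the embedding $L^r(\Omega,h) \hookrightarrow L^s(\Omega)$ directly from Hölder's inequality, exploiting the fact that the conformal weight $h = J_{\varphi^{-1}} = |(\varphi^{-1})'|^2$ interacts cleanly with the change of variables induced by $\varphi : \Omega' \to \Omega$. First I would rewrite the $L^s$-integrand by inserting a balanced power of the weight,
\[
\iint\limits_\Omega |f(x,y)|^s\,dxdy = \iint\limits_\Omega \big(|f|^r h\big)^{\frac{s}{r}}\, h^{-\frac{s}{r}}\,dxdy,
\]
and apply Hölder's inequality with the conjugate exponents $r/s$ and $r/(r-s)$ (legitimate since $s<r$ because $\alpha>2$), which yields
\[
\iint\limits_\Omega |f|^s\,dxdy \leq \left(\iint\limits_\Omega |f|^r h\,dxdy\right)^{\frac{s}{r}}\left(\iint\limits_\Omega h^{-\frac{s}{r-s}}\,dxdy\right)^{\frac{r-s}{r}}.
\]

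Next I would substitute the defining relation $s=\frac{\alpha-2}{\alpha}r$, which gives $r-s=\frac{2r}{\alpha}$ and hence $\frac{s}{r-s}=\frac{\alpha-2}{2}$. Thus the second factor becomes $\iint_\Omega h^{-(\alpha-2)/2}\,dxdy = \iint_\Omega |(\varphi^{-1})'|^{-(\alpha-2)}\,dxdy$. Performing the change of variables $(x,y)=\varphi(u,v)$, using $|(\varphi^{-1})'(\varphi(u,v))| = |\varphi'(u,v)|^{-1}$ together with $dxdy = |\varphi'(u,v)|^2\,dudv$, I expect the integrand to collapse exactly to $|\varphi'|^{\alpha}$:
\[
\iint\limits_\Omega h^{-\frac{\alpha-2}{2}}\,dxdy = \iint\limits_{\Omega'} |\varphi'(u,v)|^{\alpha-2}\cdot|\varphi'(u,v)|^2\,dudv = \iint\limits_{\Omega'} |\varphi'(u,v)|^{\alpha}\,dudv.
\]
Taking $s$-th roots of the whole inequality and checking that $\frac{r-s}{rs}=\frac{2}{\alpha}\cdot\frac{1}{s}$ then produces the claimed estimate.

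The argument is essentially routine once the exponent bookkeeping is done; the only points needing care are the admissibility of the Hölder splitting and the finiteness of the right-hand side. The constraint $1\leq s$ is precisely what the hypothesis $r\geq \alpha/(\alpha-2)$ guarantees, since $s=\frac{\alpha-2}{\alpha}r\geq 1$ is equivalent to $r\geq\frac{\alpha}{\alpha-2}$, ensuring that $\|f\mid L^s(\Omega)\|$ is a genuine norm, while the conformal $\alpha$-regularity of $\Omega$ makes $\iint_{\Omega'}|\varphi'|^{\alpha}\,dudv$ finite. The main conceptual step, and the reason the exponent $\frac{2}{\alpha}\cdot\frac{1}{s}$ appears, is the exact cancellation in the change of variables: the Jacobian $|\varphi'|^2$ supplies precisely the two missing powers that upgrade $|\varphi'|^{\alpha-2}$ to $|\varphi'|^{\alpha}$.
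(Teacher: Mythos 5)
Your proof is correct, and it is essentially the standard argument: the paper itself states this lemma without proof, citing \cite{GU16}, where the result is obtained by exactly this route --- H\"older's inequality with exponents $r/s$ and $r/(r-s)$ applied to $|f|^s = (|f|^r h)^{s/r}h^{-s/r}$, followed by the conformal change of variables that converts $\iint_\Omega h^{-(\alpha-2)/2}\,dxdy$ into $\iint_{\Omega'}|\varphi'|^{\alpha}\,dudv$. Your exponent bookkeeping ($\frac{s}{r-s}=\frac{\alpha-2}{2}$, $\frac{r-s}{rs}=\frac{2}{\alpha}\cdot\frac{1}{s}$) and the justification of admissibility ($s<r$ from $\alpha>2$, and $s\geq 1$ from $r\geq \alpha/(\alpha-2)$) are all accurate.
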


The following theorem gives the upper estimate for the constant in the Poincar\'e-Sobolev inequality as an application of Theorem~\ref{Th3.1} and Lemma~\ref{Prop-reg}:
\begin{theorem}\label{Th4.3}
Let $\Omega$ be a conformal $\alpha$-regular domain about a domain $\Omega'$. Then
\begin{enumerate}
\item for any function $f \in W^{1,2}_{0}(\Omega)$ and for any $s \geq 1$, the Poincar\'e-Sobolev inequality
\[
\|f\mid L^s(\Omega)\| \leq A_{s,2}(\Omega)\|\nabla f\mid L^{2}(\Omega)\|
\]
holds with the constant
$$
A_{s,2}(\Omega) \leq A_{\frac{\alpha s}{\alpha-2},2}(\Omega') \|\varphi'\mid L^{\alpha}(\Omega')\|^{\frac{2}{s}}, \quad 2<\alpha <\infty;
$$
\item if $\alpha = \infty$ then for any function $f \in W^{1,2}_{0}(\Omega)$, the Poincar\'e-Sobolev inequality
\[
\|f\mid L^2(\Omega)\| \leq A_{2,2}(\Omega)\|\nabla f\mid L^{2}(\Omega)\|
\]
holds with the constant
$$A_{2,2}(\Omega) \leq A_{2,2}(\Omega') \big\|\varphi'\mid L^{\infty}(\Omega')\big\|\,.
$$
Here $\varphi:\Omega' \to \Omega$ is the Riemann conformal mapping of $\Omega'$ onto $\Omega$.
\end{enumerate}
\end{theorem}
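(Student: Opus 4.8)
The plan is to derive both parts by composing the two estimates already at hand: the weighted Poincar\'e-Sobolev inequality of Theorem~\ref{Th3.1}, which controls a \emph{weighted} $L^r$-norm of $f$ by its Dirichlet integral, and the embedding of Lemma~\ref{Prop-reg} (together with its $L^\infty$ counterpart), which converts a weighted norm back into an unweighted one at the cost of a power of $\|\varphi'\mid L^{\alpha}(\Omega')\|$. Throughout, the weight is the conformal Jacobian $h=J_{\varphi^{-1}}$, and by the change of variables $\iint_\Omega h\,dxdy=|\Omega'|<\infty$.

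For part (1), I would fix $s\geq 1$ and set $r=\frac{\alpha s}{\alpha-2}$, so that $s=\frac{\alpha-2}{\alpha}r$ is exactly the exponent relation of Lemma~\ref{Prop-reg}. Applying that lemma to $f$ gives
\[
\|f\mid L^s(\Omega)\|\leq\Big(\iint_{\Omega'}|\varphi'|^{\alpha}\,dudv\Big)^{\frac{2}{\alpha}\cdot\frac{1}{s}}\|f\mid L^r(\Omega,h)\|=\|\varphi'\mid L^{\alpha}(\Omega')\|^{\frac{2}{s}}\,\|f\mid L^r(\Omega,h)\|.
\]
Then I would bound the weighted norm by Theorem~\ref{Th3.1},
\[
\|f\mid L^r(\Omega,h)\|\leq A_{r,2}(h,\Omega)\,\|\nabla f\mid L^2(\Omega)\|\leq A_{r,2}(\Omega')\,\|\nabla f\mid L^2(\Omega)\|,
\]
and substitute $r=\frac{\alpha s}{\alpha-2}$; chaining the two displays produces the claimed bound on $A_{s,2}(\Omega)$.

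For part (2) the weight obeys the pointwise inequality $\|\varphi'\mid L^\infty(\Omega')\|^2\,h(x,y)=\|\varphi'\mid L^\infty(\Omega')\|^2\big/|\varphi'(\varphi^{-1}(x,y))|^2\geq 1$ for a.e.\ $(x,y)\in\Omega$, since $|\varphi'|\leq\|\varphi'\mid L^\infty(\Omega')\|$ a.e. Integrating $|f|^2$ against this inequality yields the $L^\infty$ analogue of Lemma~\ref{Prop-reg},
\[
\|f\mid L^2(\Omega)\|\leq\|\varphi'\mid L^\infty(\Omega')\|\,\|f\mid L^2(\Omega,h)\|,
\]
and applying Theorem~\ref{Th3.1} with $r=2$ to replace $\|f\mid L^2(\Omega,h)\|$ by $A_{2,2}(\Omega')\,\|\nabla f\mid L^2(\Omega)\|$ gives precisely the asserted constant.

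The step I expect to require the most care is the exponent bookkeeping together with the verification of the admissible ranges. The choice $r=\frac{\alpha s}{\alpha-2}$ matches Lemma~\ref{Prop-reg}, whose hypothesis $r\geq\alpha/(\alpha-2)$ is equivalent to $s\geq 1$; but Theorem~\ref{Th3.1} additionally demands $r\geq 2$, i.e.\ $s\geq 2-\tfrac{4}{\alpha}$. This is automatic from $s\geq 1$ when $2<\alpha\leq 4$, whereas for $\alpha>4$ and small $s$ one must first reduce to the endpoint $r=2$ using the finite mass $\iint_\Omega h\,dxdy=|\Omega'|$ of $(\Omega,h\,dxdy)$ before invoking Theorem~\ref{Th3.1}. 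Finally, part (2) can be read off as the degeneration $\alpha\to\infty$ of part (1) at $s=2$ (where $r=\frac{2\alpha}{\alpha-2}\to 2$ and $\|\varphi'\mid L^{\alpha}(\Omega')\|\to\|\varphi'\mid L^\infty(\Omega')\|$), which serves as a consistency check on the constants.
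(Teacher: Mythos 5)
Your part (1) follows the paper's proof exactly: Lemma~\ref{Prop-reg} with the conformal weight $h=J_{\varphi^{-1}}$, then Theorem~\ref{Th3.1} with $r=\frac{\alpha s}{\alpha-2}$, chained together. Part (2) is where you genuinely diverge. The paper never invokes Theorem~\ref{Th3.1} there; instead it inserts the conformal identity $|(\varphi^{-1})'|^{-2}|(\varphi^{-1})'|^{2}=1$ under the integral, extracts $\esssup_{\Omega}|(\varphi^{-1})'|^{-2}=\big\|\varphi'\mid L^{\infty}(\Omega')\big\|^{2}$, converts the surviving weighted integral into $\iint_{\Omega'}|f\circ\varphi|^{2}\,dudv$ by the change-of-variables formula, applies the unweighted Poincar\'e--Sobolev inequality on $\Omega'$, and pulls the Dirichlet integral back through the conformal isometry of Lemma~\ref{L2.2}. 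Your route---the pointwise bound $\big\|\varphi'\mid L^{\infty}(\Omega')\big\|^{2}h\geq1$ a.e.\ followed by Theorem~\ref{Th3.1} at the admissible endpoint $r=2$---is correct (conformal maps preserve null sets, so the a.e.\ bound on $|\varphi'|$ transfers from $\Omega'$ to $\Omega$) and is shorter, essentially because Theorem~\ref{Th3.1} already packages the change-of-variables and isometry work that the paper redoes by hand; what the paper's version buys is that part (2) stands independently of the weighted theorem imported from~\cite{Pch}.

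The exponent-range issue you flag in part (1) is real, and the paper passes over it silently: with $r=\frac{\alpha s}{\alpha-2}$, Theorem~\ref{Th3.1} as stated requires $r\geq2$, i.e.\ $s\geq2-\frac{4}{\alpha}$, so for $\alpha>4$ the written proof does not cover $1\leq s<2-\frac{4}{\alpha}$. However, your proposed repair does not quite recover the theorem as stated: H\"older's inequality in the finite measure space $(\Omega,h\,dxdy)$ gives $\|f\mid L^{r}(\Omega,h)\|\leq|\Omega'|^{\frac{1}{r}-\frac{1}{2}}\|f\mid L^{2}(\Omega,h)\|$, and hence the constant $|\Omega'|^{\frac{1}{r}-\frac{1}{2}}A_{2,2}(\Omega')\,\|\varphi'\mid L^{\alpha}(\Omega')\|^{\frac{2}{s}}$. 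Since $A_{r,2}(\Omega')\leq|\Omega'|^{\frac{1}{r}-\frac{1}{2}}A_{2,2}(\Omega')$ for $r<2$, with no reverse inequality available, your bound is in general weaker than the claimed $A_{\frac{\alpha s}{\alpha-2},2}(\Omega')\,\|\varphi'\mid L^{\alpha}(\Omega')\|^{\frac{2}{s}}$. The clean repair is different: the restriction $r\geq2$ in Theorem~\ref{Th3.1} is inessential, because its proof (change of variables, the Poincar\'e--Sobolev inequality in the bounded domain $\Omega'$, which holds for every $r\geq1$ in the plane, and Lemma~\ref{L2.2}) applies verbatim for $1\leq r<2$; with that observation, the constant claimed in the theorem is obtained for every $s\geq1$.
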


\begin{remark}
The constant $A_{2,2}^2({\Omega'})=1/\lambda_1({\Omega'})$, where $\lambda_1({\Omega'})$ is the first Dirichlet eigenvalue of the Laplace operator in a domain ${\Omega'}\subset\mathbb R^2$.
\end{remark}

\begin{proof}
1. Let a function $f \in W_0^{1,2}(\Omega)$. Then by Lemma~\ref{Prop-reg} in the case $h(x,y):=J_{\varphi^{-1}}(x,y)=|(\varphi^{-1})'(x,y)|^2$ we get
\begin{multline*}
\left(\iint\limits_{\Omega}|f(x,y)|^sdxdy\right)^\frac{1}{s} \\
\leq
\left(\iint\limits_{\Omega'}|\varphi'(u,v)|^{\alpha}dudv\right)^{\frac{2}{\alpha} \cdot \frac{1}{s}}
\left(\iint\limits_{\Omega}|f(x,y)|^{r}|(\varphi^{-1})'(x,y)|^2dxdy\right)^{\frac{1}{r}}
\end{multline*}
for $s=\frac{\alpha -2}{\alpha}r$.

According to Theorem~\ref{Th3.1} we have
\[
\left(\iint\limits_\Omega |f(x,y)|^r|(\varphi^{-1})'(x,y)|^2dxdy\right)^{\frac{1}{r}} \leq A_{r,2}(\Omega')
\left(\iint\limits_\Omega |\nabla f(x,y)|^2 dxdy\right)^{\frac{1}{2}}.
\]

Combining these inequalities and given that $r=\alpha s/(\alpha -2)$ we obtain
\[
\left(\iint\limits_{\Omega}|f(x,y)|^sdxdy\right)^\frac{1}{s} \\
\leq
A_{\frac{\alpha s}{\alpha-2},2}(\Omega') \|\varphi'\mid L^{\alpha}(\Omega')\|^{\frac{2}{s}}
\left(\iint\limits_\Omega |\nabla f(x,y)|^2 dxdy\right)^{\frac{1}{2}}
\]
for any $s \geq 1$.

2. Let a function  $f\in W_0^{1,2}(\Omega)$. Given the following conformal equality
$$
|\varphi'(u,v))|^{-2}=|(\varphi^{-1})'(x,y)|^2
$$
holds for all $(u,v)\in \Omega'$ and for all $(x,y)\in \Omega$ we obtain
\begin{multline*}
\left(\iint\limits_{\Omega} |f(x,y)|^2~dxdy\right)^{\frac{1}{2}}
=\left(\iint\limits_{\Omega} |f(x,y)|^2|(\varphi^{-1})'(x,y)|^{-2}|(\varphi^{-1})'(x,y)|^2~dxdy\right)^{\frac{1}{2}} \\
\leq \|(\varphi^{-1})' \mid L^{\infty}(\Omega)\|^{-1} \left(\iint\limits_{\Omega} |f(x,y)|^2||(\varphi^{-1})'(x,y)|^2~dxdy\right)^{\frac{1}{2}} \\
= \|\varphi' \mid L^{\infty}(\Omega')\|
\left(\iint\limits_{\Omega} |f(x,y)|^2||(\varphi^{-1})'(x,y)|^2~dxdy\right)^{\frac{1}{2}}.
\end{multline*}
Applying the change of variable formula for conformal mappings we have
\[
\left(\iint\limits_{\Omega} |f(x,y)|^2||(\varphi^{-1})'(x,y)|^2~dxdy\right)^{\frac{1}{2}} =
\left(\iint\limits_{\Omega'} |f \circ \varphi(u,v)|^2~dudv\right)^{\frac{1}{2}}.
\]
Given (non-weighed) Poincar\'e-Sobolev inequality \cite{M} we get
\[
\left(\iint\limits_{\Omega'} |f \circ \varphi(u,v)|^2~dudv\right)^{\frac{1}{2}}
\leq
A_{2,2}(\Omega')
\left(\iint\limits_{\Omega'} |\nabla (f \circ \varphi(u,v))|^2~dudv\right)^{\frac{1}{2}}.
\]
By Lemma~\ref{L2.2}
\[
\left(\iint\limits_{\Omega'} |\nabla (f \circ \varphi(u,v))|^2~dudv\right)^{\frac{1}{2}} =
\left(\iint\limits_\Omega |\nabla f(x,y)|^2~dxdy\right)^{\frac{1}{2}}.
\]
Thus, by combining the expressions obtained above, we finally obtain the desired result:
\[
\left(\iint\limits_{\Omega} |f(x,y)|^2~dxdy\right)^{\frac{1}{2}} \\
\leq \|\varphi' \mid L^{\infty}(\Omega')\|
A_{2,2}(\Omega')
\left(\iint\limits_\Omega |\nabla f(x,y)|^2~dxdy\right)^{\frac{1}{2}}.
\]
\end{proof}

\section{Lower estimates for $\lambda_1(\Omega)$}

We consider the Dirichlet eigenvalue problem~\eqref{Dir} in the weak formulation:
\[
\iint\limits_{\Omega} \left\langle \nabla u(x,y), \nabla v(x,y) \right\rangle dxdy
= \lambda \iint\limits_{\Omega} u(x,y)v(x,y)~dxdy, \quad v \in W_0^{1,2}(\Omega).
\]

Recall that the first Dirichlet eigenvalue of the Laplace operator is defined by
\[
\lambda_1(\Omega)= \inf_{u \in W_0^{1,2}(\Omega) \setminus \{0\}} \frac{\iint\limits_{\Omega} |\nabla u(x,y)|^2~dxdy}{\iint\limits_{\Omega} |u(x,y)|^2~dxdy}\,.
\]
In other words, $\lambda_1^{-\frac{1}{2}}(\Omega)$ is the exact constant $A_{2,2}(\Omega)$ in the Poincar\'e-Sobolev inequality
\[
\left( \iint\limits_{\Omega} |u(x,y)|^2~dxdy \right)^{\frac{1}{2}} \leq A_{2,2}(\Omega)
\left( \iint\limits_{\Omega} |\nabla u(x,y)|^2~dxdy \right)^{\frac{1}{2}}, \quad u \in W_0^{1,2}(\Omega).
\]

\begin{theorem}\label{Th4.1}
Let $\Omega$ be a conformal $\alpha$-regular domain about a domain $\Omega'$. Then
\[
\frac{1}{\lambda_1(\Omega)} \leq A^2_{\frac{2\alpha}{\alpha-2},2}(\Omega') \|\varphi'\mid L^{\alpha}(\Omega')\|^2,
\]
where $\varphi:\Omega' \to \Omega$ is the Riemann conformal mapping of $\Omega'$ onto $\Omega$ and
\[
A_{\frac{2\alpha}{\alpha -2},2}(\Omega') \leq \inf\limits_{p\in \left(\frac{\alpha}{\alpha -1},2\right)}
\left(\frac{p-1}{2-p}\right)^{\frac{p-1}{p}}
\frac{\left(\sqrt{\pi}\cdot\sqrt[p]{2}\right)^{-1}|\Omega'|^{\frac{\alpha-2}{2\alpha}}}{\sqrt{\Gamma(2/p) \Gamma(3-2/p)}}.
\]
\end{theorem}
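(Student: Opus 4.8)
The plan is to derive Theorem~\ref{Th4.1} directly from part~(1) of Theorem~\ref{Th4.3} by specializing the Sobolev exponent $s$ and using the variational characterization of $\lambda_1(\Omega)$ recalled at the start of Section~4. Recall that $\lambda_1^{-1/2}(\Omega)$ is precisely the best constant $A_{2,2}(\Omega)$ in the Poincar\'e-Sobolev inequality $\|f\mid L^2(\Omega)\|\leq A_{2,2}(\Omega)\|\nabla f\mid L^2(\Omega)\|$. So the entire statement reduces to producing an upper bound for $A_{2,2}(\Omega)$, and then squaring and inverting.

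First I would apply Theorem~\ref{Th4.3}(1) with the choice $s=2$. This immediately gives, for every $f\in W_0^{1,2}(\Omega)$,
\[
\|f\mid L^2(\Omega)\| \leq A_{\frac{2\alpha}{\alpha-2},2}(\Omega')\,\|\varphi'\mid L^{\alpha}(\Omega')\|\,\|\nabla f\mid L^2(\Omega)\|,
\]
since with $s=2$ the exponent $\tfrac{\alpha s}{\alpha-2}$ becomes $\tfrac{2\alpha}{\alpha-2}$ and the power $\tfrac{2}{s}$ on the derivative norm becomes $1$. Taking the infimum over nonzero $f$ shows $A_{2,2}(\Omega)\leq A_{\frac{2\alpha}{\alpha-2},2}(\Omega')\|\varphi'\mid L^{\alpha}(\Omega')\|$. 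Squaring and using $A_{2,2}^2(\Omega)=1/\lambda_1(\Omega)$ yields the main inequality
\[
\frac{1}{\lambda_1(\Omega)} \leq A^2_{\frac{2\alpha}{\alpha-2},2}(\Omega')\,\|\varphi'\mid L^{\alpha}(\Omega')\|^2,
\]
which is exactly the first displayed claim.

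It remains to justify the explicit upper bound for $A_{\frac{2\alpha}{\alpha-2},2}(\Omega')$. This comes from substituting $r=\tfrac{2\alpha}{\alpha-2}$ into the general estimate for $A_{r,2}(\Omega')$ quoted after Theorem~\ref{Th3.1}. With this value of $r$ one has $\tfrac{2r}{r+2}=\tfrac{\alpha}{\alpha-1}$, so the infimum is taken over $p\in(\tfrac{\alpha}{\alpha-1},2)$, matching the stated range; and the area exponent $\tfrac{1}{r}$ becomes $\tfrac{\alpha-2}{2\alpha}$, giving $|\Omega'|^{\frac{\alpha-2}{2\alpha}}$. The remaining factors $(\tfrac{p-1}{2-p})^{\frac{p-1}{p}}$, $(\sqrt{\pi}\sqrt[p]{2})^{-1}$ and $\big(\Gamma(2/p)\Gamma(3-2/p)\big)^{-1/2}$ are unchanged, so the two-line formula in the statement is just the specialized quotation.

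The argument is essentially a bookkeeping specialization, so I do not anticipate a genuine obstacle; the only point requiring care is the arithmetic of the exponents $r=\tfrac{2\alpha}{\alpha-2}$, $\tfrac{2r}{r+2}=\tfrac{\alpha}{\alpha-1}$, and $\tfrac{1}{r}=\tfrac{\alpha-2}{2\alpha}$, which must be checked so that the endpoint of the infimum range and the power on $|\Omega'|$ come out exactly as claimed. One should also note that the use of part~(1) of Theorem~\ref{Th4.3} presupposes $2<\alpha<\infty$, which is implicit in the hypothesis that $\Omega$ is conformal $\alpha$-regular.
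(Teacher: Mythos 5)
Your proposal is correct and follows essentially the same route as the paper: the paper's proof also applies Theorem~\ref{Th4.3} with $s=2$, uses the minimax (best-constant) characterization $A_{2,2}^2(\Omega)=1/\lambda_1(\Omega)$, and obtains the explicit constant by substituting $r=\frac{2\alpha}{\alpha-2}$ into the quoted bound for $A_{r,2}(\Omega')$. Your exponent checks $\frac{2r}{r+2}=\frac{\alpha}{\alpha-1}$ and $\frac{1}{r}=\frac{\alpha-2}{2\alpha}$ are exactly the bookkeeping the paper leaves implicit.
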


\begin{proof}
By the minimax principle and Theorem~\ref{Th4.3} in the case $s=2$, we have
\[
\iint\limits_{\Omega}|u(x,y)|^2~dxdy \leq A^2_{2,2}(\Omega)
\iint\limits_{\Omega} |\nabla u(x,y)|^2~dxdy,
\]
where
\[
A_{2,2}(\Omega)\leq A_{\frac{2\alpha}{\alpha-2},2}(\Omega') \|\varphi'\mid L^{\alpha}(\Omega')\|.
\]
Thus
\[
\frac{1}{\lambda_1(A,\Omega)} \leq A^2_{\frac{2\alpha}{\alpha-2},2}(\Omega') \|\varphi'\mid L^{\alpha}(\Omega')\|^2.
\]
\end{proof}

This theorem can be reformulated in terms of the domain conformal radius \cite{Avkh,BM}:

\begin{theorem}\label{th4.2}
Let $\Omega$ be a conformal $\alpha$-regular domain about a domain $\Omega'$. Then
\[
\frac{1}{\lambda_1(\Omega)} \leq A^2_{\frac{2\alpha}{\alpha-2},2}(\Omega')
\left(\iint\limits_{\Omega'} \left( \frac{R_{\Omega}(\varphi(u,v))}{R_{\Omega'}(u,v)}\right)^{\alpha}dudv\right)^{\frac{2}{\alpha}},
\]
where $\varphi:\Omega' \to \Omega$ is the Riemann conformal mapping of $\Omega'$ onto $\Omega$,
$R_{\Omega'}(u,v)$ and $R_{\Omega}(\varphi(u,v))$ are conformal radii of $\Omega'$ and $\Omega$,
\[
A_{\frac{2\alpha}{\alpha -2},2}(\Omega') \leq \inf\limits_{p\in \left(\frac{\alpha}{\alpha -1},2\right)}
\left(\frac{p-1}{2-p}\right)^{\frac{p-1}{p}}
\frac{\left(\sqrt{\pi}\cdot\sqrt[p]{2}\right)^{-1}|\Omega'|^{\frac{\alpha-2}{2\alpha}}}{\sqrt{\Gamma(2/p) \Gamma(3-2/p)}}.
\]
\end{theorem}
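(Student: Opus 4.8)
The plan is to obtain Theorem~\ref{th4.2} directly from Theorem~\ref{Th4.1} by rewriting the Sobolev norm $\|\varphi' \mid L^{\alpha}(\Omega')\|$ in terms of conformal radii. The only fact needed beyond Theorem~\ref{Th4.1} is the classical transformation law for the conformal radius under a conformal mapping, which has already been invoked in this paper to reformulate the conformal $\alpha$-regularity condition.

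First I would recall that for a simply connected domain $D \subset \mathbb R^2$ and a point $z \in D$, the conformal radius $R_D(z)$ equals $|g'(0)|$, where $g:\mathbb{D} \to D$ is the Riemann map with $g(0)=z$. Composing the Riemann maps of $\Omega'$ and $\Omega$ with $\varphi$ and applying the chain rule, one obtains the pointwise identity
\[
R_{\Omega}(\varphi(u,v)) = |\varphi'(u,v)|\, R_{\Omega'}(u,v), \qquad (u,v) \in \Omega',
\]
equivalently $|\varphi'(u,v)| = R_{\Omega}(\varphi(u,v))/R_{\Omega'}(u,v)$. This is precisely the relation recorded in \cite{Avkh,BM}. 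Raising it to the power $\alpha$, integrating over $\Omega'$, and taking the $(2/\alpha)$-th power gives
\[
\|\varphi' \mid L^{\alpha}(\Omega')\|^2
= \left(\iint\limits_{\Omega'} |\varphi'(u,v)|^{\alpha}\,dudv\right)^{\frac{2}{\alpha}}
= \left(\iint\limits_{\Omega'} \left(\frac{R_{\Omega}(\varphi(u,v))}{R_{\Omega'}(u,v)}\right)^{\alpha} dudv\right)^{\frac{2}{\alpha}}.
\]
Substituting this equality into the estimate of Theorem~\ref{Th4.1} yields the asserted inequality, while the upper bound on $A_{\frac{2\alpha}{\alpha-2},2}(\Omega')$ is carried over verbatim from that theorem.

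Because every step is a direct substitution, there is essentially no analytic obstacle: the statement is a geometric restatement of Theorem~\ref{Th4.1}. The only point requiring minor care is to fix the normalization of the conformal radius so that the transformation law holds with the factor $|\varphi'|$ rather than its reciprocal, ensuring that the ratio $R_{\Omega}(\varphi)/R_{\Omega'}$ (and not its inverse) appears under the integral.
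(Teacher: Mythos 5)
Your proposal is correct and follows exactly the paper's route: the authors present Theorem~\ref{th4.2} as a direct restatement of Theorem~\ref{Th4.1} via the identity $|\varphi'(u,v)| = R_{\Omega}(\varphi(u,v))/R_{\Omega'}(u,v)$, which they record in the introduction with references to \cite{Avkh,BM}. Your derivation of that identity from the chain rule for Riemann maps, and the subsequent substitution into Theorem~\ref{Th4.1}, is precisely the intended (and in the paper only implicit) argument.
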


\begin{remark}
In the case of the unit disc $\mathbb D$, the conformal radius is defined by the equality $R_{\mathbb D}(w)=1-|w|^2$, $w=(u,v)$.
\end{remark}

In the limit case $\alpha=\infty$ we have the following assertion:

\vskip 0.2cm
\noindent
{\bf Theorem~A.}
\textit{Let $\Omega$ be a conformal $\infty$-regular domain about a domain $\Omega'$. Then
\begin{equation}\label{Estimate}
\lambda_1(\Omega) \geq \frac{\lambda_1(\Omega')}{\big\|\varphi' \mid L^{\infty}(\Omega')\big\|^2},
\end{equation}
where $\varphi:\Omega' \to \Omega$ is the Riemann conformal mapping of $\Omega'$ onto $\Omega$.
}

\vskip 0.2cm

As an application of Theorem~A we consider the following examples.

\begin{example}
\label{example1}
The diffeomorphism
$$
\varphi(z)=e^z, \quad z=x+iy,
$$
is conformal and maps the rectangle $Q:=(0,d) \times (0,\pi)$
in the $z$-plane onto the interior of the domain
\[
\Omega:= \left\{w:1<|w|<e^d,\,\, 0<\arg w<\pi \right\}
\]
in the $w$-plane.

Now we estimate the norm of the derivative $\varphi'$ in $\big\|\varphi' \mid L^{\infty}(Q)\big\|$.
Given that, $|\varphi'(z)|^2=e^{2x}$, we obtain
\[
\big\|\varphi' \mid L^{\infty}(\Omega')\big\|^2=\esssup\limits_{Q}e^{2x}\leq e^{2d}.
\]

Since $\lambda_1(Q)=1+\pi^2/d^2$, by Theorem~A we have
\begin{equation}\label{K-P}
\lambda_1(\Omega_e)\geq \frac{\pi^2+d^2}{d^2e^{2d}}.
\end{equation}

Applying the Rayleigh-Faber-Krahn and Makai ($\gamma =1/4$, $\rho =(e^d-1)/2$) inequalities for the domain $\Omega$, we obtain, respectively
$$
\lambda_1(\Omega_e) \geq \frac{2(j_{0,1})^2}{e^{2d}-1} \quad \text{and} \quad \lambda_1(\Omega_e) \geq \frac{1}{(e^d-1)^2}.
$$

For example, if $0<d\leq \ln 2$, then the following inequalities
\[
\frac{\pi^2+d^2}{d^2e^{2d}}>\frac{2(j_{0,1})^2}{e^{2d}-1}>\frac{1}{(e^d-1)^2}.
\]
hold. This means that for $0<d\leq \ln 2$, the estimate~\eqref{K-P} is better than the estimates based on the Rayleigh-Faber-Krahn and Makai inequalities. Several estimates of $\lambda_1(\Omega)$ are given in Table 1.

\begin{table}[h]
\caption{Estimates of $\lambda_1(\Omega)$}
\begin{center}
\begin{tabular}{ccccc}
d: & 1 & $\ln 2$ & $\ln \sqrt{3}$ & $\ln \sqrt{2}$ \\
Makai: & 0,338 & 1 & 1,866 & 5,828 \\
RFK: & 1,810 & 3,855 & 5,783 & 11,566 \\
Estimate~\eqref{K-P}: & 1,471 & 5,385 & 11,236 & 41,584 \\
\end{tabular}
\end{center}
\end{table}
\end{example}

\begin{example}
\label{example1}
The diffeomorphism
$$
\varphi(z)=\sin z, \quad z=x+iy,
$$
is conformal and maps the rectangle $Q:=(- \pi /2,\pi /2) \times ([-d,d)$
in the $z$-plane onto the interior of the ellipse
\[
\Omega_e:= \left(\frac{u}{\cosh d}\right)^2+\left(\frac{v}{\sinh d}\right)^2=1
\]
in the $w$-plane with slits from the foci $(\pm 1,0)$ to the tips of the major semi-axes $(\pm \cosh d,0)$.

Now we estimate the norm of the derivative $\varphi'$ in $\big\|\varphi' \mid L^{\infty}(Q)\big\|$.
Given that, $|\varphi'(z)|^2=(\cos 2x+\cosh 2y)/2$, we obtain
\[
\big\|\varphi' \mid L^{\infty}(\Omega')\big\|^2=\esssup\limits_{Q}\frac{1}{2}(\cos 2x+\cosh 2y)
\leq \frac{1}{2}(1+\cosh 2d)=\cosh^{2}d.
\]

Since $\lambda_1(Q)=1+\pi^2/(2d)^2$, by Theorem~A we have
\begin{equation}\label{KP}
\lambda_1(\Omega_e)\geq \frac{\pi^2+(2d)^2}{(2d\cosh d)^2}.
\end{equation}

Applying the Rayleigh-Faber-Krahn and Makai ($\gamma =1/4$, $\rho =d$) inequalities for the domain $\Omega_e$, we obtain, respectively
$$
\lambda_1(\Omega_e) \geq \frac{2(j_{0,1})^2}{\sinh 2d} \quad \text{and} \quad \lambda_1(\Omega_e) \geq \frac{1}{4d^2}.
$$

For example, if $0<d\leq1/3$, then the following inequalities
\[
\frac{\pi^2+(2d)^2}{(2d\cosh d)^2}>\frac{2(j_{0,1})^2}{\sinh 2d}>\frac{1}{4d^2}.
\]
hold. This means that for $0<d\leq1/3$, the estimate~\eqref{KP} is better than the estimates based on the Rayleigh-Faber-Krahn and Makai inequalities. Several estimates of $\lambda_1(\Omega_e)$ are given in Table 2.

\begin{table}[h]
\caption{Estimates of $\lambda_1(\Omega_e)$}
\begin{center}
\begin{tabular}{ccccc}
d: & 1/2 & 1/3 & 1/4 & 1/8 \\
Makai: & 1 & 1,25 & 4 & 16 \\
RFK: & 9,841 & 16,127 & 22,195 & 45,786 \\
Estimate~\eqref{KP}: & 8,548 & 20,807 & 38,050 & 156,456 \\
\end{tabular}
\end{center}
\end{table}
\end{example}

Using Theorem~A and the domain monotonicity property of the Dirichlet eigenvalues for the Laplace operator (see, for example, \cite{Henr,M}), we obtain the following result.
\begin{theorem}\label{var}
Let $\Omega$ be a conformal $\infty$-regular domain about a domain $\Omega'$. We assume that $\Omega \subset \Omega'$. Then
\begin{equation*}
\lambda_1(\Omega)-\lambda_1(\Omega') \geq \frac{1-\big\|\varphi' \mid L^{\infty}(\Omega')\big\|^2}{\big\|\varphi' \mid L^{\infty}(\Omega')\big\|^2} \lambda_1(\Omega'),
\end{equation*}
where $\varphi:\Omega' \to \Omega$ is the Riemann conformal mapping of $\Omega'$ onto $\Omega$.
\end{theorem}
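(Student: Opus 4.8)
The plan is to combine Theorem~A with the domain monotonicity property of the first Dirichlet eigenvalue. The statement to prove is that, under the hypothesis $\Omega \subset \Omega'$,
\[
\lambda_1(\Omega)-\lambda_1(\Omega') \geq \frac{1-\big\|\varphi' \mid L^{\infty}(\Omega')\big\|^2}{\big\|\varphi' \mid L^{\infty}(\Omega')\big\|^2}\,\lambda_1(\Omega').
\]
The essential observation is that both ingredients are already available: Theorem~A gives a lower bound for $\lambda_1(\Omega)$ in terms of $\lambda_1(\Omega')$ and the sup-norm of $\varphi'$, while the inclusion $\Omega \subset \Omega'$ lets me control $\lambda_1(\Omega)$ from below by $\lambda_1(\Omega')$ as well. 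The proof is essentially an algebraic rearrangement of the Theorem~A estimate.

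First I would invoke Theorem~A directly. Writing $M := \big\|\varphi' \mid L^{\infty}(\Omega')\big\|^2$ for brevity, Theorem~A yields
\[
\lambda_1(\Omega) \geq \frac{\lambda_1(\Omega')}{M}.
\]
Next I would subtract $\lambda_1(\Omega')$ from both sides, giving
\[
\lambda_1(\Omega)-\lambda_1(\Omega') \geq \frac{\lambda_1(\Omega')}{M}-\lambda_1(\Omega') = \frac{1-M}{M}\,\lambda_1(\Omega'),
\]
which is exactly the claimed inequality once $M$ is expanded back to $\big\|\varphi' \mid L^{\infty}(\Omega')\big\|^2$. This is the entire computation, so the bulk of the proof is simply recognizing that the conclusion is a restatement of Theorem~A.

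The one place that requires genuine thought, rather than algebra, is the role of the hypothesis $\Omega \subset \Omega'$. In the displayed rearrangement above I never used it, so I would want to understand why it is stated. The resolution is that the inclusion controls the sign and sharpness of the bound: by domain monotonicity, $\Omega \subset \Omega'$ forces $\lambda_1(\Omega) \geq \lambda_1(\Omega')$, so the left-hand side $\lambda_1(\Omega)-\lambda_1(\Omega')$ is nonnegative, i.e. the variation is a genuine spectral gap rather than a quantity of indeterminate sign. Simultaneously, for a conformal map of $\Omega'$ onto a subdomain $\Omega \subset \Omega'$ one expects $M \leq 1$ (the map contracts in an averaged sense), which makes the factor $(1-M)/M$ nonnegative and renders the Theorem~A bound compatible with—indeed a quantitative refinement of—the monotonicity inequality. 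Thus the hypothesis $\Omega \subset \Omega'$ is what guarantees the estimate is meaningful (both sides nonnegative) rather than what drives the algebra.

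The main obstacle, then, is not a hard estimate but rather making the logical structure transparent: I would present the proof as a one-line consequence of Theorem~A, and then add a sentence explaining that the inclusion hypothesis, via domain monotonicity, ensures $\lambda_1(\Omega)\geq\lambda_1(\Omega')$ so that the resulting lower bound on the difference is a sharpening of the trivial monotonicity bound. No further analytic input is needed, since all the hard work (the Poincaré–Sobolev machinery and the isometry of Lemma~\ref{L2.2}) has been absorbed into Theorem~A.
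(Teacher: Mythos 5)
Your proof is correct and coincides with the paper's own argument: the paper, too, simply applies Theorem~A and rearranges, citing the inclusion $\Omega\subset\Omega'$ only for the monotonicity fact $\lambda_1(\Omega)\ge\lambda_1(\Omega')$, which (as you observe) the algebra never actually uses. One caveat about your commentary: the expectation $\big\|\varphi'\mid L^{\infty}(\Omega')\big\|^2\le 1$ for a conformal map onto a subdomain is false in general (e.g.\ for a slit disc the boundary of $\Omega$ is longer than that of $\Omega'$, forcing $\sup|\varphi'|>1$), but since that claim plays no role in your derivation, the proof is unaffected.
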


\begin{proof}
Because $\Omega \subset \Omega'$ we have $\lambda_1(\Omega) \geq \lambda_1(\Omega')$. Taking into account the inequality~\eqref{Estimate} in Theorem~A and making some calculation, we find
\begin{equation*}
\lambda_1(\Omega)-\lambda_1(\Omega') \geq \frac{1-\big\|\varphi' \mid L^{\infty}(\Omega')\big\|^2}{\big\|\varphi' \mid L^{\infty}(\Omega')\big\|^2} \lambda_1(\Omega').
\end{equation*}
The theorem is proved.
\end{proof}

\subsection{Estimates in convex domains}
We precise Theorem~A and estimates of the spectral gap \cite{GPU18} using an explicit upper bound for the derivative of a
conformal mapping of the unit disc onto a convex domain from \cite{Kov}.
\begin{definition}\label{def}
Suppose $\Omega$ is a simply connected convex domain that contains 0 and has $C^{1,1}$-smooth boundary. We say that such a domain satisfies the $(R_O,R_I,R_C)$ condition if:

$\bullet$ $R_O$, $R_I$, $R_C$ are all positive,

$\bullet$ $R_O$ is the minimal $r$ such that $\Omega \subset D(0, r)$,

$\bullet$ $R_I$ is the maximum $r$ such that $D(0, r) \subset \Omega$,

$\bullet$ $\Omega$ can be expressed as a union of open discs of radius $R_C$.
\end{definition}

The subscripts in Definition~\ref{def} serve to indicate that $R_O$ is the outer radius, $R_I$ the
inner radius, and $R_C$ a curvature radius.
\begin{theorem}\label{Kov}
Let $\Omega$ satisfy the $(R_O,R_I,R_C)$ condition in Definition~\ref{def}. Then for any
conformal mapping $\varphi:\mathbb D \to \Omega$ fixing 0 we have
\begin{equation}\label{distortion}
\sup |\varphi'| \leq R_C \exp \left\{2(R_O-R_C) \frac{\log R_I - \log R_C}{R_I-R_C}\right\}.
\end{equation}
When $R_I=R_C$, the difference quotient is understood as $1/R_I$.
\end{theorem}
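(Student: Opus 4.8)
The plan is to reduce the estimate to a boundary maximum and then exploit the analytic description of convexity. Since $\Omega$ is convex with $C^{1,1}$ boundary, Kellogg--Warschawski regularity guarantees that $\varphi'$ extends continuously and without zeros to $\overline{\mathbb D}$; hence $\log|\varphi'|$ is harmonic in $\mathbb D$ and continuous up to the boundary, so by the maximum principle $\sup_{\mathbb D}|\varphi'| = \max_{|z|=1}|\varphi'(z)|$. This turns the task into estimating the boundary values of $|\varphi'|$, and explains why both the interior data (through $R_I$, $R_O$) and the boundary curvature bound ($R_C$) must enter: one integrates the growth of $|\varphi'|$ from the center, where it is pinned, out to the boundary.

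The central tool I would use is the classical structural representation of convex maps. Convexity of $\varphi(\mathbb D)$ is equivalent to $\mathrm{Re}\,p(z)\ge 0$, where $p(z)=1+z\varphi''(z)/\varphi'(z)$ and $p(0)=1$; by the Herglotz representation there is a probability measure $\mu$ on the circle with $p(z)=\int (e^{i\tau}+z)(e^{i\tau}-z)^{-1}\,d\mu(\tau)$. Integrating $\varphi''/\varphi'=(p-1)/z$ from $0$ gives
\[
\log\varphi'(z)=\log\varphi'(0)-2\int_{-\pi}^{\pi}\log(1-ze^{-i\tau})\,d\mu(\tau),
\]
so that on $|z|=1$,
\[
\log|\varphi'(e^{i\theta})|=\log|\varphi'(0)|-2\int_{-\pi}^{\pi}\log|e^{i\theta}-e^{i\tau}|\,d\mu(\tau).
\]
Two geometric facts feed into this. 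First, the conformal radius is monotone under inclusion, and $D(0,R_I)\subset\Omega\subset D(0,R_O)$ yields $R_I\le|\varphi'(0)|\le R_O$, controlling the first term. Second, the curvature of $\theta\mapsto\varphi(e^{i\theta})$ equals $\kappa=|\varphi'|^{-1}\mathrm{Re}\,p(e^{i\theta})$, and the union-of-discs hypothesis forces $\kappa\le 1/R_C$; since $\mu$ is absolutely continuous for a $C^{1,1}$ boundary with density $\tfrac{1}{2\pi}\kappa|\varphi'|$, this bounds the density of $\mu$ in terms of $|\varphi'|/R_C$.

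What remains is to bound the logarithmic potential $-2\int\log|e^{i\theta}-e^{i\tau}|\,d\mu(\tau)$ and to assemble the pieces into the exponential with the logarithmic-mean exponent $2(R_O-R_C)\tfrac{\log R_I-\log R_C}{R_I-R_C}$. I expect this to be the main obstacle, for two reasons. The bound on the density of $\mu$ involves $|\varphi'|$ itself, so the boundary identity above is really a self-referential (Gronwall-type) relation rather than an explicit formula; disentangling it, presumably by propagating the curvature bound along radii or by comparing $\mu$ against the extremal curvature measure, is the technical crux. Moreover, matching the exact constant — in particular producing the logarithmic mean $L(R_I,R_C)=(R_I-R_C)/(\log R_I-\log R_C)$ and the factor $2(R_O-R_C)$, with the stated limiting interpretation $1/R_I$ when $R_I=R_C$ — strongly suggests that the sharp bound comes from comparison with a specific extremal domain (a disc-capped, stadium-type region interpolating the three radii) rather than from a single clean integration. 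I would therefore organize the final step as a comparison argument, verifying that the representation above is maximized by that extremal configuration and then computing the resulting integral explicitly.
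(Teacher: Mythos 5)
A preliminary remark: the paper does not prove this statement at all --- Theorem~\ref{Kov} is quoted from Kovalev's paper \cite{Kov} --- so your attempt has to be judged against that original argument rather than anything in the text above.

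Your proposal is a plan rather than a proof, and the gap sits exactly where you yourself flag it. The preparatory steps are fine: the reduction of $\sup|\varphi'|$ to the boundary, the Herglotz representation $\log\varphi'(z)=\log\varphi'(0)-2\int\log(1-ze^{-i\tau})\,d\mu(\tau)$, the bound $R_I\le|\varphi'(0)|\le R_O$, and the identification of $d\mu/d\tau$ with $\tfrac{1}{2\pi}\kappa|\varphi'|$ together with $\kappa\le 1/R_C$. But the theorem \emph{is} precisely the quantitative estimate you then leave open, and the route you sketch for it hits a real obstruction. The only information your setup gives about $\mu$ is its total mass $1$ and the density bound $d\mu/d\tau\le M/(2\pi R_C)$, where $M=\sup|\varphi'|$ is the unknown; the worst admissible measure (uniform on an arc of length $2\pi R_C/M$ centered at $e^{i\theta}$) turns your boundary identity into an inequality of the shape $\log M\le\log R_O+2\log M+C(R_C)$, in which the factor $2$ inherited from the Herglotz formula puts a larger coefficient on the unknown on the right-hand side than on the left. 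No Gronwall-type iteration closes such an inequality, so structure finer than the density bound is indispensable, and the proposal does not supply it: the ``comparison with an extremal stadium-type domain'' is never formulated, and no reason is offered why such a configuration should maximize the logarithmic potential among measures arising from conformal maps.

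For contrast, Kovalev's actual proof bypasses the representation formula entirely and is a pointwise hyperbolic-geometry argument. Fix a boundary point $w_0=\varphi(e^{i\theta_0})$ and a disc $D(c,R_C)\subset\Omega$ with $w_0\in\partial D(c,R_C)$ (this is what the union-of-discs condition provides). Composing $\varphi^{-1}$ with the affine map of $\mathbb D$ onto $D(c,R_C)$ gives a holomorphic self-map $h$ of $\mathbb D$ sending a boundary point $\zeta_0$ to $e^{i\theta_0}$; Julia's lemma bounds the angular derivative, $|h'(\zeta_0)|\ge\frac{1-|a|}{1+|a|}$ with $a=\varphi^{-1}(c)$, and the chain rule $\varphi'(e^{i\theta_0})h'(\zeta_0)=R_C\gamma$, $|\gamma|=1$, converts this into
\[
|\varphi'(e^{i\theta_0})|\le R_C\,\frac{1+|a|}{1-|a|}=R_C\,e^{2d_{\Omega}(0,c)},
\]
by conformal invariance of the hyperbolic metric. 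Finally, $d_\Omega(0,c)$ is estimated along the straight segment from $0$ to $c$: concavity of $\dist(\cdot,\partial\Omega)$ on the convex domain $\Omega$ gives $\dist(tc,\partial\Omega)\ge(1-t)R_I+tR_C$, and the elementary bound $\lambda_\Omega\le 1/\dist(\cdot,\partial\Omega)$ for the hyperbolic density yields
\[
d_\Omega(0,c)\le|c|\int_0^1\frac{dt}{(1-t)R_I+tR_C}
=|c|\,\frac{\log R_I-\log R_C}{R_I-R_C}
\le(R_O-R_C)\,\frac{\log R_I-\log R_C}{R_I-R_C},
\]
since $|c|\le R_O-R_C$. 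This makes every feature of the constant transparent: the factor $2$ comes from Julia's lemma, the logarithmic mean from integrating the reciprocal of a linear function, and the convention $1/R_I$ when $R_I=R_C$ from $\int_0^1 dt/R_I$. None of this is recoverable from the extremal-domain guess, which is why your proposal should be classified as containing a genuine missing idea rather than as an alternative proof.
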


Given the estimate~\eqref{distortion} in Theorem~\ref{Kov} we obtain the lower bound for the first Dirichlet eigenvalue of the Laplace operator for a convex domain.
\begin{theorem}\label{convex}
Let $\Omega$ be a convex conformal $\infty$-regular domain about the unit disc $\mathbb D$ and satisfy the $(R_O,R_I,R_C)$ condition in Definition~\ref{def}. Then
\[
\lambda_1(\Omega) \geq \frac{j_{0,1}^2}{R_C^2} \exp \left\{-4(R_O-R_C) \frac{\log R_I - \log R_C}{R_I-R_C}\right\},
\]
where $j_{0,1} \approx 2.4048$ is the first positive zero of the Bessel function $J_0$.
\end{theorem}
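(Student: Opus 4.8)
The plan is to specialize Theorem~A to the model domain $\Omega' = \mathbb{D}$, the unit disc, and then feed the distortion bound of Theorem~\ref{Kov} into the resulting inequality. Since $\Omega$ is assumed to be a convex conformal $\infty$-regular domain about $\mathbb{D}$, there is a Riemann conformal mapping $\varphi : \mathbb{D} \to \Omega$, and because $\Omega$ contains the origin I may normalize $\varphi$ so that $\varphi(0) = 0$. This is exactly the normalization under which Theorem~\ref{Kov} is stated, so the two ingredients are compatible.

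First I would record the eigenvalue of the model domain: for the unit disc one has $\lambda_1(\mathbb{D}) = j_{0,1}^2$, which is the Rayleigh--Faber--Krahn value $j_{0,1}^2/R_*^2$ with $R_* = 1$. Substituting $\Omega' = \mathbb{D}$ into the estimate~\eqref{Estimate} of Theorem~A then gives
\[
\lambda_1(\Omega) \geq \frac{\lambda_1(\mathbb{D})}{\big\|\varphi' \mid L^{\infty}(\mathbb{D})\big\|^2} = \frac{j_{0,1}^2}{\big\|\varphi' \mid L^{\infty}(\mathbb{D})\big\|^2}.
\]

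Next I would bound the denominator from above. Observing that $\big\|\varphi' \mid L^{\infty}(\mathbb{D})\big\| = \sup_{\mathbb{D}} |\varphi'|$, the convexity hypothesis together with the $(R_O, R_I, R_C)$ condition lets me invoke the distortion inequality~\eqref{distortion} of Theorem~\ref{Kov} and square it:
\[
\big\|\varphi' \mid L^{\infty}(\mathbb{D})\big\|^2 \leq R_C^2 \exp\left\{4(R_O - R_C)\frac{\log R_I - \log R_C}{R_I - R_C}\right\}.
\]
Inserting this into the previous display and moving the exponential into the numerator with a sign change yields precisely the claimed lower bound.

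There is no genuine analytic obstacle here: the argument is a direct composition of two already-established results plus the explicit eigenvalue of the disc. The only points that require care are bookkeeping ones --- verifying that the normalization $\varphi(0)=0$ demanded by Theorem~\ref{Kov} is indeed available (guaranteed by $0 \in \Omega$), confirming that $\infty$-regularity forces $\sup_{\mathbb{D}}|\varphi'| < \infty$ so that the right-hand side is finite and the estimate nonvacuous, and correctly squaring~\eqref{distortion} so that the factor $2$ in its exponent becomes the factor $4$ appearing in the statement.
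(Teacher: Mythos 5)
Your proof is correct and is exactly the argument the paper intends (the paper states this theorem without a written proof, merely noting that it follows from the estimate~\eqref{distortion} combined with Theorem~A): specialize Theorem~A to $\Omega' = \mathbb{D}$, use $\lambda_1(\mathbb{D}) = j_{0,1}^2$, and square Kovalev's bound on $\sup_{\mathbb{D}}|\varphi'|$. Your bookkeeping remarks --- that $0 \in \Omega$ by Definition~\ref{def} makes the normalization $\varphi(0)=0$ available, and that the factor $2$ in the exponent becomes $4$ upon squaring --- are precisely the points that need checking, and they are handled correctly.
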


In the work \cite{GPU18} were obtained asymptotically sharp lower estimates
for the spectral gap between the first two Dirichlet eigenvalues for conformal regular domains.
\begin{theorem}\label{gap}
Let $\Omega$ be a conformal $\infty$-regular domain about the unit disc $\mathbb D$ of area $\pi$.
Then
\begin{multline*}
\lambda_2(\Omega)-\lambda_1(\Omega) \\
\geq \lambda_2(\mathbb D)-\lambda_1(\mathbb{D})-(\lambda_{*}^2 +1)\lambda_1^2(\mathbb D_{\rho}) \gamma_{\infty}  \bigl(\|\varphi' \mid L^{\infty}(\mathbb D)\| + 1 \bigr) \|\varphi'-1 \mid L^{2}(\mathbb D)\|,
\end{multline*}
where $\lambda_{*} \approx 2.539$, $\mathbb D_{\rho}$ is the largest disc inscribed in $\Omega$ and
\[
\gamma_{\infty} = \inf\limits_{p\in \left(\frac{4}{3},2\right)}
\left(\frac{p-1}{2-p}\right)^{\frac{2(p-1)}{p}}
\frac{\pi^{-\frac{1}{2}} 4^{-\frac{1}{p}}}{\Gamma(2/p) \Gamma(3-2/p)}.
\]
\end{theorem}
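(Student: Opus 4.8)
The plan is to transport the eigenvalue problem from $\Omega$ to the unit disc $\mathbb{D}$ along the conformal map $\varphi$ and then to treat the resulting weighted problem as a perturbation of the unweighted one on $\mathbb{D}$. By Lemma~\ref{L2.2} the pullback $\tilde u = u \circ \varphi$ is an isometry of $L^{1,2}_0(\Omega)$ onto $L^{1,2}_0(\mathbb{D})$, so the Dirichlet integral is conformally invariant, while the mass term acquires the conformal weight: $\iint_\Omega |u|^2\,dxdy = \iint_{\mathbb{D}} |\tilde u|^2 |\varphi'|^2\,dudv$. Hence $\lambda_n(\Omega)$ coincides with the $n$-th eigenvalue $\mu_n$ of the weighted problem $-\Delta \tilde u = \mu\,|\varphi'|^2 \tilde u$ on $\mathbb{D}$ with zero boundary data, and the gap $\lambda_2(\Omega)-\lambda_1(\Omega)$ equals $\mu_2-\mu_1$. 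The normalization $|\Omega|=\pi=|\mathbb{D}|$ forces $\iint_{\mathbb{D}}(|\varphi'|^2-1)\,dudv=0$, so the weight deviation $|\varphi'|^2-1$ is mean-zero; this is what makes $\|\varphi'-1\mid L^2(\mathbb{D})\|$ the natural measure of the size of the perturbation.

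The single scalar estimate driving everything is a bound on the mass defect. For any $u\in W^{1,2}_0(\mathbb{D})$, H\"older's inequality gives $\bigl|\iint_{\mathbb{D}}|u|^2(|\varphi'|^2-1)\,dudv\bigr|\leq \|u\mid L^4(\mathbb{D})\|^2\,\||\varphi'|^2-1\mid L^2(\mathbb{D})\|$; the Poincar\'e--Sobolev inequality of Theorem~\ref{Th3.1} with $r=4$ (whose squared constant is exactly $\gamma_\infty=A_{4,2}^2(\mathbb{D})$, as one checks by squaring the displayed bound for $A_{r,2}$ at $r=4$, $|\mathbb{D}|=\pi$) converts $\|u\mid L^4\|^2$ into $\gamma_\infty\|\nabla u\mid L^2\|^2$; and the factorization $\bigl||\varphi'|^2-1\bigr|=|\varphi'-1|\,|\varphi'+1|\leq(\|\varphi'\mid L^\infty\|+1)\,|\varphi'-1|$ yields $\||\varphi'|^2-1\mid L^2\|\leq(\|\varphi'\mid L^\infty\|+1)\|\varphi'-1\mid L^2\|$. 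Combining these, with $\delta:=\gamma_\infty(\|\varphi'\mid L^\infty(\mathbb{D})\|+1)\|\varphi'-1\mid L^2(\mathbb{D})\|$, I obtain
\[
\left|\iint_{\mathbb{D}}|u|^2(|\varphi'|^2-1)\,dudv\right|\leq \delta\,\|\nabla u\mid L^2(\mathbb{D})\|^2,\qquad u\in W^{1,2}_0(\mathbb{D}).
\]
The same reasoning applied to products $\tilde u_i\tilde u_j$ of eigenfunctions controls the off-diagonal terms $\iint_{\mathbb{D}}\tilde u_i\tilde u_j(|\varphi'|^2-1)\,dudv$, which quantify the failure of the weighted eigenfunctions to be $L^2(\mathbb{D})$-orthogonal.

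With these bounds I would run a two-sided perturbation argument through the minimax principle. Normalizing $\iint_{\mathbb{D}}|\tilde u_n|^2|\varphi'|^2\,dudv=1$ (so that $\|\nabla\tilde u_n\mid L^2(\mathbb{D})\|^2=\mu_n$), the mass-defect bound shows $\|\tilde u_n\mid L^2(\mathbb{D})\|^2=1+O(\delta\mu_n)$ and that the $L^2(\mathbb{D})$ Gram matrix of $\tilde u_1,\tilde u_2$ differs from the identity by $O(\delta\sqrt{\mu_1\mu_2})$. Feeding $\mathrm{span}\{\tilde u_1,\tilde u_2\}$ into the variational characterization of $\lambda_2(\mathbb{D})$, and conversely feeding suitably projected unweighted eigenfunctions into the weighted Rayleigh quotient for $\mu_2$, produces $\mu_2-\mu_1\geq(\lambda_2(\mathbb{D})-\lambda_1(\mathbb{D}))-(\text{error})$, where the error is $\delta$ multiplied by eigenvalue factors arising from the substitutions $\|\nabla\tilde u_n\|^2=\mu_n$. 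These factors are then made geometric: since the largest inscribed disc satisfies $\mathbb{D}_\rho\subset\Omega$, domain monotonicity gives $\mu_n=\lambda_n(\Omega)\leq\lambda_n(\mathbb{D}_\rho)$, so the $\mu_n$ in the error are bounded by $\lambda_1^2(\mathbb{D}_\rho)$, while the ratio $\mu_2/\mu_1$ forced by the orthogonality correction is governed by $\lambda_*$, matching the unperturbed ratio $\lambda_2(\mathbb{D})/\lambda_1(\mathbb{D})=j_{1,1}^2/j_{0,1}^2\approx2.539$. Together these assemble into the prefactor $(\lambda_*^2+1)\lambda_1^2(\mathbb{D}_\rho)\gamma_\infty$.

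The hard part will be the bookkeeping of the two-eigenvalue perturbation rather than any individual inequality. For a one-eigenvalue bound such as Theorem~A it suffices that the weighted and unweighted masses are comparable, but for the gap one must rule out that the perturbation collapses $\lambda_1(\Omega)$ and $\lambda_2(\Omega)$ together: the orthogonality defect between $\tilde u_1$ and $\tilde u_2$ has to be propagated simultaneously through the $\mu_2$ lower bound and the $\mu_1$ upper bound while keeping the error strictly linear in $\|\varphi'-1\mid L^2(\mathbb{D})\|$ and collapsing into the single stated constant. Forcing the eigenvalue-dependent factors to reduce exactly to $(\lambda_*^2+1)\lambda_1^2(\mathbb{D}_\rho)$, rather than to a cruder combination, is the delicate step, and it rests on the definite unperturbed gap $\lambda_2(\mathbb{D})-\lambda_1(\mathbb{D})=j_{1,1}^2-j_{0,1}^2>0$, which guarantees the estimate is nonvacuous for sufficiently small conformal deformations.
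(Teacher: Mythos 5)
First, the point of comparison: the paper contains no proof of this theorem at all. It is imported verbatim from \cite{GPU18} (``In the work \cite{GPU18} were obtained asymptotically sharp lower estimates\dots''), so your attempt can only be measured against the cited argument. Your first two paragraphs do reproduce its strategy correctly: conformal transplantation identifies $\lambda_n(\Omega)$ with the eigenvalues $\mu_n$ of the weighted problem on $\mathbb D$ with weight $h=|\varphi'|^2$ (Lemma~\ref{L2.2}), and the mass defect is controlled by H\"older with exponent $4$, the constant $A_{4,2}(\mathbb D)$ (whose squared upper bound at $r=4$, $|\mathbb D|=\pi$, is indeed $\gamma_\infty$ --- note it is an upper bound, not the exact constant), and a pointwise estimate on $|h-1|$. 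One local error: your claimed identity $\bigl||\varphi'|^2-1\bigr|=|\varphi'-1|\,|\varphi'+1|$ is false for complex-valued $\varphi'$ (take $\varphi'=i$: the left side is $0$, the right side is $2$); what is true, and suffices, is $\bigl||\varphi'|^2-1\bigr|=(|\varphi'|+1)\bigl||\varphi'|-1\bigr|\le\bigl(\|\varphi'\mid L^\infty(\mathbb D)\|+1\bigr)|\varphi'-1|$ by the reverse triangle inequality, so your $L^2$ bound and hence your $\delta$ survive.

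The genuine gap is the one you concede yourself: the entire two-sided minimax step --- the only place the stated constant can come from --- is left as $O(\delta\mu_n)$ bookkeeping with ``suitably projected'' trial functions, and your proposed mechanism for producing $\lambda_*$ (``the ratio $\mu_2/\mu_1$ forced by the orthogonality correction'') is not where it comes from. Moreover, the Gram-matrix machinery you anticipate is unnecessary: the weighted eigenfunctions $\tilde u_1,\tilde u_2$ are orthogonal not only in $L^2(\mathbb D,h)$ but also in the Dirichlet inner product, since $\iint_{\mathbb D}\nabla\tilde u_1\cdot\nabla\tilde u_2\,dudv=\mu_2\iint_{\mathbb D}\tilde u_1\tilde u_2\,h\,dudv=0$. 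Consequently every $u\in\spn\{\tilde u_1,\tilde u_2\}$ satisfies $\|\nabla u\mid L^2(\mathbb D)\|^2\le\mu_2\|u\mid L^2(\mathbb D,h)\|^2$, and your mass-defect bound gives $\|u\mid L^2(\mathbb D)\|^2\ge(1-\delta\mu_2)\,\mu_2^{-1}\|\nabla u\mid L^2(\mathbb D)\|^2$; feeding this plane into the Courant minimax yields
\[
\lambda_2(\mathbb D)\le\frac{\mu_2}{1-\delta\mu_2},\qquad\text{i.e.}\qquad \lambda_2(\mathbb D)-\mu_2\le\delta\,\mu_2\,\lambda_2(\mathbb D),
\]
while testing the weighted Rayleigh quotient with the disc ground state gives symmetrically $\mu_1-\lambda_1(\mathbb D)\le\delta\,\lambda_1(\mathbb D)\,\mu_1$. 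The constant is then purely geometric: $|\Omega|=\pi$ forces $\rho\le1$, so domain monotonicity and disc scaling give $\mu_2,\lambda_2(\mathbb D)\le\lambda_2(\mathbb D_\rho)=\lambda_*\lambda_1(\mathbb D_\rho)$ and $\mu_1,\lambda_1(\mathbb D)\le\lambda_1(\mathbb D_\rho)$ --- the scale-invariant disc ratio $\lambda_*=j_{1,1}^2/j_{0,1}^2\approx2.539$ is how $\lambda_*$ enters, not any orthogonality defect. Adding the two displacements bounds the total error by $\delta(\lambda_*^2+1)\lambda_1^2(\mathbb D_\rho)$, which is exactly the theorem; in the degenerate cases $\delta\mu_2\ge1$ or $\delta\lambda_1(\mathbb D)\ge1$ the right-hand side of the stated estimate is nonpositive and the claim is trivial. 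So your setup is sound and matches the cited proof, but the decisive quantitative step was missing, and the route you sketched for it (projections plus Gram corrections) would have made the constant worse rather than reproduce $(\lambda_*^2+1)\lambda_1^2(\mathbb D_\rho)$.
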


In accordance with estimate~\eqref{distortion}, Theorem~\ref{gap} is rewritten as
\begin{theorem}
Let $\Omega$ be a convex conformal $\infty$-regular domain about the unit disc $\mathbb D$ of area $\pi$ and satisfy the $(R_O,R_I,R_C)$ condition in Definition~\ref{def}. Then
\begin{multline*}
\lambda_2(\Omega)-\lambda_1(\Omega)
\geq \lambda_2(\mathbb D)-\lambda_1(\mathbb{D})
-(\lambda_{*}^2 +1)\lambda_1^2(\mathbb D_{\rho}) \gamma_{\infty}  \\
\times \left(R_C \exp \left\{2(R_O-R_C) \frac{\log R_I - \log R_C}{R_I-R_C}\right\}+1\right)
\|\varphi'-1 \mid L^{2}(\mathbb D)\|,
\end{multline*}
where $\lambda_{*} \approx 2.539$, $\mathbb D_{\rho}$ is the largest disc inscribed in $\Omega$ and
\[
\gamma_{\infty} = \inf\limits_{p\in \left(\frac{4}{3},2\right)}
\left(\frac{p-1}{2-p}\right)^{\frac{2(p-1)}{p}}
\frac{\pi^{-\frac{1}{2}} 4^{-\frac{1}{p}}}{\Gamma(2/p) \Gamma(3-2/p)}.
\]
\end{theorem}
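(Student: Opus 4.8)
The plan is to obtain this statement as an immediate consequence of Theorem~\ref{gap} combined with the explicit distortion bound of Theorem~\ref{Kov}: the present theorem is exactly Theorem~\ref{gap} with $\|\varphi'\mid L^\infty(\mathbb D)\|$ replaced by its explicit upper estimate. First I would check that the two input theorems are simultaneously applicable. The hypotheses here --- that $\Omega$ is a convex, conformal $\infty$-regular domain about $\mathbb D$ of area $\pi$ satisfying the $(R_O,R_I,R_C)$ condition of Definition~\ref{def} --- contain both the hypotheses of Theorem~\ref{gap} (conformal $\infty$-regularity about $\mathbb D$ of area $\pi$) and those of Theorem~\ref{Kov} (the $(R_O,R_I,R_C)$ condition for a convex $C^{1,1}$ domain). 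Fixing the Riemann mapping $\varphi:\mathbb D\to\Omega$ that sends $0$ to $0$, both conclusions hold for this same $\varphi$.

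Next I would pass from Theorem~\ref{gap} to the desired form. Since $\varphi$ is conformal, $\varphi'$ is continuous on $\mathbb D$ and the essential supremum equals the supremum, so $\|\varphi'\mid L^\infty(\mathbb D)\| = \sup|\varphi'|$; the distortion estimate~\eqref{distortion} then gives
\[
\|\varphi'\mid L^\infty(\mathbb D)\| \leq R_C \exp\left\{2(R_O-R_C)\frac{\log R_I - \log R_C}{R_I-R_C}\right\},
\]
with the convention for $R_I=R_C$ inherited from Theorem~\ref{Kov}.

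The last step is the substitution, and the only point requiring any care is the direction of the inequality. Abbreviating $C:=(\lambda_*^2+1)\lambda_1^2(\mathbb D_\rho)\gamma_\infty$, the bound of Theorem~\ref{gap} reads $\lambda_2(\Omega)-\lambda_1(\Omega)\geq \lambda_2(\mathbb D)-\lambda_1(\mathbb D) - C\bigl(\|\varphi'\mid L^\infty(\mathbb D)\|+1\bigr)\|\varphi'-1\mid L^2(\mathbb D)\|$, where the subtracted quantity is multiplied by the manifestly nonnegative factors $C\geq 0$ and $\|\varphi'-1\mid L^2(\mathbb D)\|\geq 0$. Hence replacing $\|\varphi'\mid L^\infty(\mathbb D)\|$ by the larger right-hand side of~\eqref{distortion} only increases the subtracted term and therefore only decreases the right-hand side; the inequality is preserved and becomes precisely the asserted estimate. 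I expect no genuine analytic obstacle: the entire content resides in Theorems~\ref{gap} and~\ref{Kov}, and the present result is simply their combination, the sole verification being that every factor multiplying the $L^\infty$-norm is nonnegative, so that an upper bound on that norm yields a valid lower bound on the spectral gap.
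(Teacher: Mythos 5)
Your proposal is correct and coincides with the paper's own (essentially one-line) argument: the paper obtains this theorem precisely by substituting the distortion bound~\eqref{distortion} of Theorem~\ref{Kov} for $\|\varphi'\mid L^{\infty}(\mathbb D)\|$ in Theorem~\ref{gap}. Your extra care — fixing the Riemann map with $\varphi(0)=0$ so both theorems apply to the same $\varphi$, and checking that the substituted term enters with nonnegative multiplicative factors so the inequality direction is preserved — is exactly the (unstated) verification the paper relies on.
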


\vskip 0.3cm

\textbf{Acknowledgements.}
The authors thank Vladimir Gol'dshtein and Alexander Ukhlov for useful discussions and valuable comments.
This work was supported by RSF Grant No. 23-21-00080.

\vskip 0.3cm

\vskip 0.3cm

Regional Scientific and Educational Mathematical Center, Tomsk State University, 634050 Tomsk, Lenin Ave. 36, Russia
							
\emph{E-mail address:} \email{ia.kolesnikov@mail.ru}

Regional Scientific and Educational Mathematical Center, Tomsk State University, 634050 Tomsk, Lenin Ave. 36, Russia
							
\emph{E-mail address:} \email{va-pchelintsev@yandex.ru} \\

\end{document}